\theoremstyle{plain}
\newtheorem{thm}{Theorem}[section]
\newtheorem{lem}[thm]{Lemma}
\theoremstyle{definition}
\theoremstyle{remark}
\newtheorem{rem}[thm]{Remark}
\begin{document}
\baselineskip=19pt
\title{Seifert surfaces in open books, and pass moves on links}
\author{Susumu Hirose}
\address{Department of Mathematics,  
Faculty of Science and Technology, 
Tokyo University of Science, 
Noda, Chiba, 278-8510, Japan} 
\email{hirose\b{ }susumu@ma.noda.tus.ac.jp}
\author{Yuusuke Nakashima}
\address{Sanyou Junior High School, 103-1 Nobusue, Himeji city,
Hyogo 670-0966, Japan} 
\thanks{The first-named author is supported by Grant-in-Aid for 
Scientific Research (C) (No. 24540096), 
Japan Society for the Promotion of Science. }
\begin{abstract} 
The flat plumbing basket presentation of a link is introduced by 
Furihata, Hirasawa and Kobayashi.
In this paper, we show that the pass-equivalence and an equivalence 
introduced by using the flat plumbing basket presentation 
are the same relation. 
Furthermore, 
we obtain an evaluation of the minimal number of bands used for the flat 
pluming basket presentation for a knot coming from the degree of the 
Alexander polynomial and the three genus of the knot. 
\end{abstract}
\maketitle

\section{Introduction}

The {\em trivial open book decomposition\/} $\mathcal{O}$ of the 3-sphere $S^3$ 
is a decomposition of $S^3$ into infinitely many disks (called {\em pages\/}) 
sharing their boundaries. 
In \cite{FHK}, a concept of positions of Seifert surface called 
the flat plumbing basket associated to $\mathcal{O}$ are introduced and investigated. 
A Seifert surface of a link is said to be a {\em flat plumbing basket\/} 
if it consists of a single page of $\mathcal{O}$ and finitely many bands that are 
embedded in distinct pages. 
We say that a link $L$ admits a {\em flat plumbing basket presentation\/} 
if there is a flat plumbing basket $F$ such that $\partial F = L$. 
We obtain a flat plumbing basket by attaching bands $b_1, b_2, \ldots, b_n$ 
from the bottom to the top on the positive side of a disk. 
By projecting the bands on the disk, we obtain a {\em flat basket diagram\/} 
$\mathcal{D}$ which is a pair consisting of a disk $D$ and a union of 
properly embedded arcs $a_1, a_2, \ldots, a_n$, where 
$a_i$ is the core of the image of $b_i$. 
We call the subscript $i$ of $a_i$ the {\em label\/} of the arc. 
The following theorem assure that any link is presented by 
a flat plumbing diagram. 
\begin{thm}\cite{FHK}\label{thm:existence}
%
Any link admits a flat plumbing basket presentation. 
\end{thm}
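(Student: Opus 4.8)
The plan is to manufacture a flat plumbing basket from an arbitrary Seifert surface of $L$ by a controlled sequence of modifications. First, take any Seifert surface $F$ for $L$ (say, the one produced by Seifert's algorithm on some diagram) and, since $F$ is connected with nonempty boundary, collapse it to a handle decomposition with a single $0$-handle: merging the Seifert disks along the bands of a spanning tree of the associated graph exhibits $F$, up to ambient isotopy, as a \emph{disk with bands} $F = D \cup b_1 \cup \cdots \cup b_n$, the bands attached along $2n$ disjoint arcs of $\partial D$. Since $D$ is an embedded disk, $\partial D$ is an unknot, so we may ambiently isotope the pair $(S^3, D)$ to $(S^3, P_0)$ for a page $P_0$ of $\mathcal{O}$; then $\partial D$ is the binding, each $b_i$ is a ribbon attached to the binding and running into the complementary ball $\overline{S^3 \setminus P_0}$, which is swept out by the remaining pages. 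We still have $\partial F = L$, but the bands are in general twisted, knotted, and mutually linked, and we must remove all of this while keeping $\partial F$ fixed --- being allowed to alter $F$, in particular to raise its genus, which in the disk-with-bands picture means inserting a canceling pair of bands (geometrically, tubing $F$).

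The core of the proof is an induction that drives the band system to a flat configuration. One fixes a complexity for the band system --- for example the number of crossings in a projection of $b_1 \cup \cdots \cup b_n$ to $D$ together with the total amount of twisting carried by the bands --- and shows it can always be strictly decreased, with $\partial F$ unchanged, by a local move: an ambient isotopy that slides a band (or an outermost sub-ribbon of one) toward a single page; a handle slide of one band over another to unhook a linking; or, when these are blocked, the insertion of a canceling band pair that supplies the extra room to absorb a clasp or cancel a full twist into flat bands lying in newly introduced pages. When the complexity reaches its minimum, every band is untwisted and each is isotopic, rel its feet on the binding, to a properly embedded arc in a page; a generic perturbation puts these arcs in pairwise distinct pages, displaying $F$ as a flat plumbing basket, and its image under projection to $D$ is the required flat basket diagram $\mathcal{D}$.

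The hard part is termination: flattening one band tends to push crossings or twists onto the others, so the complexity must be a genuine monovariant under every move above, and the induction has to be organized carefully --- for instance by always processing first the band whose ribbon reaches deepest into the stack of pages, in the spirit of a Markov- or Bennequin-type normalization --- so that no step ever increases it. An alternative that largely sidesteps this bookkeeping is to exploit the known connection with arc presentations: Cromwell's theorem places $L$ in finitely many pages meeting each in a single arc, and a suitable thickening of those arcs, together with the Seifert-circle bookkeeping against a reference page, produces a flat plumbing basket; this makes finiteness transparent at the cost of a less self-contained argument. Either development establishes the theorem.
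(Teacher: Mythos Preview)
Your high-level strategy coincides with the paper's Section~2 argument: start from a disk-with-bands Seifert surface for $L$, place the disk as a page, and then modify the surface---allowing the genus to rise---until every band lies flatly in its own page. Where your proposal and the paper diverge is in the mechanism that makes the procedure terminate, and this is exactly the point at which your write-up has a gap.

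You set up a complexity (crossings of the projected band cores plus total twisting) and assert that some combination of isotopies, handle slides, and insertions of cancelling band pairs always decreases it; but you then concede that ``the hard part is termination'' and that flattening one band may push crossings or twists onto the others. That concession is accurate, and nothing in the proposal resolves it: neither the suggested complexity nor the suggested ordering (``process first the band whose ribbon reaches deepest'') is shown to be monovariant under the listed moves. As written, the induction is a plan rather than a proof. The Cromwell alternative you mention is also only a sketch: going from an arc presentation of $L$ to a flat plumbing basket for $L$ requires producing a \emph{surface}, not just arcs in pages, and the ``Seifert-circle bookkeeping'' you allude to is not carried out.

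The paper avoids the termination difficulty altogether by choosing a very rigid normal form before doing any surgery. After untwisting every band by the local kink move (their Figure~\ref{fig:twistband}), the band cores are isotoped so that they consist only of horizontal and vertical segments, with the convention that at every crossing the horizontal strand is the overstrand (their Figure~\ref{fig:cross} enforces this). In that picture a band is already ``flat'' exactly when each of its horizontal segments has both adjacent vertical segments going downward (type~(a) in Figure~\ref{fig:h-classify}); the only obstruction is a horizontal sub-segment whose neighbouring vertical goes upward. Each such bad spot is removed by a single local \emph{push-down} (Figure~\ref{fig:pushdown-interval}), which adds two new bands (genus up by one) but leaves $\partial S=L$ unchanged and creates no new bad spots. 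Hence the number of bad spots strictly decreases, and the procedure terminates after finitely many push-downs in a flat plumbing basket. The point to take away is that the right normal form makes the ``complexity'' simply the count of bad horizontal sub-segments, and a single explicit move kills them one at a time---no delicate global bookkeeping is needed.
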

\begin{figure}[hbtp]
\includegraphics[height=1.5cm]{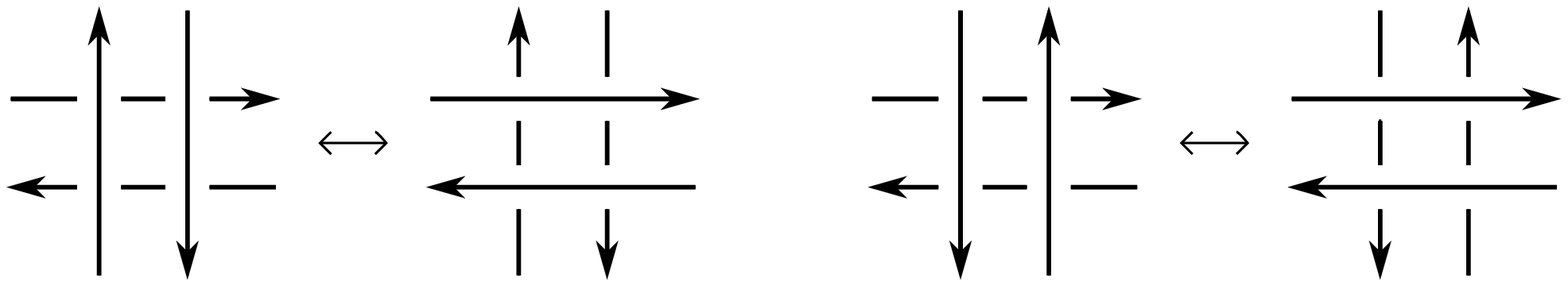}
\caption{
%
}
\label{fig:passmove}
\end{figure}
Let $\underline{\mathcal{D}}$ be a diagram obtained from 
a flat plumbing diagram $\mathcal{D}$ by forgetting the indices. 
We call $\underline{\mathcal{D}}$ a {\em underlying diagram\/} of $\mathcal{D}$. 
Two links $L$ and $L'$ are {\em f.p.b.-equivalent\/} if there is a sequence 
of links $L_0$, $L_1$, $\ldots$, $L_n$ such that 
$L_0 = L$, $L_n=L'$, and there are flat plumbing diagrams $\mathcal{D}_i$ 
and $\mathcal{D}_{i+1}$ of $L_i$ and $L_{i+1}$ respectively such that 
$\underline{\mathcal{D}_i} = \underline{\mathcal{D}_{i+1}}$. 
The local transformation on link diagrams indicated in Figure \ref{fig:passmove} 
is called {\em passmove\/}. 
Two links $L$ and $L'$ are {\em pass-equivalent\/} \cite{Kauffman} \cite{MN} 
if we apply a finite pass moves on $L$ then we obtain $L'$. 
We show: 
\begin{thm}\label{thm:f.p.b.eq}
%
Two links $L$ and $L'$ are f.p.b.-equivalent 
if and only if $L$ and $L'$ are pass-equivalent.   
\end{thm}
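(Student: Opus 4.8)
The plan is to prove both implications by passing to a single elementary move on each side. Pass-equivalence is generated by one pass move at a time, and f.p.b.-equivalence is, by its definition, generated by replacing a flat basket diagram $\mathcal{D}$ of a link by a flat basket diagram $\mathcal{D}'$ of another link with $\underline{\mathcal{D}}=\underline{\mathcal{D}'}$, that is, by relabeling the arcs of $\mathcal{D}$. Both relations are equivalence relations, so it suffices to show that one relabeling step produces a pass-equivalent link, and that a single pass move is realized by a chain of relabeling steps.

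$(\Rightarrow)$ Let $\mathcal{D}'$ be obtained from $\mathcal{D}$ by relabeling, i.e. by a permutation $\sigma\in S_{n}$ of the labels $1,\dots,n$ of the arcs. Write $\sigma$ as a product of transpositions of consecutive labels; it is then enough to treat $\sigma=(k\ k{+}1)$. In the associated flat plumbing baskets this means that the bands $b_{k}$ and $b_{k+1}$ exchange their pages while every other band keeps its page. Because $k$ and $k+1$ are consecutive, no band lies between $b_{k}$ and $b_{k+1}$ in the order of pages, so the exchange is realized by pushing the flat band $b_{k+1}$ across the flat band $b_{k}$. Each time $b_{k+1}$ is pushed through $b_{k}$, the boundary link is changed by a pass move (the two boundary strands of $b_{k+1}$ near the band are pushed through $b_{k}$; see Figure~\ref{fig:passmove}), and only finitely many such passages occur. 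Hence $\partial\mathcal{D}$ and $\partial\mathcal{D}'$ are pass-equivalent, and concatenating these steps over the chain defining f.p.b.-equivalence gives the forward implication.

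$(\Leftarrow)$ Suppose $L'$ is obtained from $L$ by a single pass move, performed inside a ball $B$ in which $L\cap B$ is the boundary of two disjoint flat bands $\alpha$ and $\beta$ with $\alpha$ running past $\beta$ on one side, while $L'\cap B$ is the same picture with $\alpha$ running past $\beta$ on the other side. The key step is to produce a flat plumbing basket presentation $\mathcal{D}$ of $L$ in which $\alpha$ and $\beta$ are the bands $b_{1}$ and $b_{2}$, lying in the first two pages and in pass position inside $B$, while every other band lies outside $B$. Granting this, let $\mathcal{D}'$ be the flat basket diagram obtained from $\mathcal{D}$ by interchanging the labels $1$ and $2$. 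Then $\underline{\mathcal{D}}=\underline{\mathcal{D}'}$; since the interchange affects only the relative page order of $b_{1}$ and $b_{2}$ and changes nothing outside $B$, the boundary link $\partial\mathcal{D}'$ is isotopic to $L'$. Thus $L$ and $L'$ are f.p.b.-equivalent, and concatenating such steps completes the argument. To build the required $\mathcal{D}$, I would place this two-band local model near $B$ and then apply the construction behind Theorem~\ref{thm:existence} to the rest of $L$, pushing all of the additional bands into pages after the first two and away from $B$.

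I expect the main obstacle to be exactly this last construction: realizing an arbitrary pass move, up to isotopy, as the interchange of two consecutively labeled bands of a flat plumbing basket of $L$. This amounts to a relative version of Theorem~\ref{thm:existence} --- every link has a flat plumbing basket presentation extending a prescribed two-band model at a chosen ball --- and the delicate point is the bookkeeping of pages near the binding of $\mathcal{O}$: one must check that the bands produced by the construction of \cite{FHK} can all be moved into later pages without disturbing $b_{1},b_{2}$ while still realizing the intended underlying diagram.
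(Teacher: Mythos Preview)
Your forward direction is essentially what the paper does (the paper states it in one sentence; your reduction to adjacent transpositions and the observation that a page swap of $b_k,b_{k+1}$ changes $\partial F$ by one pass move at each crossing of $a_k$ with $a_{k+1}$ is the right content behind that sentence).

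For the backward direction, however, the paper takes a completely different route and thereby avoids exactly the obstacle you isolate. Instead of trying to realize an arbitrary pass move as a label swap in a suitably constructed flat plumbing basket (your ``relative Theorem~\ref{thm:existence}''), the paper appeals to the Kauffman / Murakami--Nakanishi classification: every link is pass-equivalent to a unique model $I_n$, $II_n$, or $III_{d,n}$. The substance of the paper's proof is then Lemma~\ref{lem:f.p.b.eq-KMN}, which shows that every link is also f.p.b.-equivalent to one of these models. The argument for that lemma is not local at all: it works on a Seifert surface in normal form, shows that a single crossing change between two bands can be effected by an f.p.b.-move (by push-downs on either side of the crossing), and then uses crossing changes and curl reductions to untangle all bands and reduce to a short explicit list of pieces, which are finally matched with $I_n$, $II_n$, $III_{d,n}$ by a handful of pictures. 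Once both equivalence relations share the same complete set of representatives, the backward implication follows formally from the forward one.

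So your proposal, as written, has a genuine gap precisely where you say it does: the relative flat plumbing basket construction is not supplied, and it is not clear how to keep the extra bands produced by the \cite{FHK} algorithm in later pages and away from $B$ without disturbing $b_1,b_2$. The paper's classification-based approach trades this local difficulty for reliance on the (nontrivial) Kauffman--Murakami--Nakanishi theorem plus a hands-on but finite case analysis; your approach, if the relative existence statement could be proved, would be more self-contained and would not need the pass-equivalence classification at all.
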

We define the {\em flat plumbing basket number\/} of a knot $K$, 
denoted by $fpbk(K)$, to be the minimal number of bands 
to obtain a flat plumbing basket surface of $K$. 
For a flat basket diagram $\mathcal{D}$, by recording the labels of 
the arcs as one travels along $\partial D$ according to counterclockwise 
orientation, one obtain a word $W$ in $\{1,2,\ldots,2n\}$ beginng from 
the letter $1$ such that each letter appears exactly twice. 
We call $W$ a {\em flat basket code\/} for $\mathcal{D}$.  
We introduce a method to obtain the Seifert matrix for the flat plumbing basket, 
and show: 
\begin{thm} \label{thm:lowerbound}
%
Let $K$ be a non-trivial knot, $\Delta_K(t)$ be the Alexander polynomial of $K$, 
$\deg \Delta_K(t) = ($the maximal degree of $\Delta_K(t) )$ $-$ 
$($ the minimal degree of $\Delta_K(t) )$, 
$a$ be the leading coefficient of $\Delta_K(t)$, 
and $g(K)$ be the minimal genus of the Seifert surface (i.e. three genus) 
of $K$. Then $fpbk(K)$ is evaluated as follows: \\
(1) If $a = \pm 1$ then $fpbk(K) \geq 
\max \{ 2g(K)+2, \deg \Delta_K(t) +2 \}$, \\
(2) If $a \not= \pm 1$ then $fpbk(K) \geq 
\max\{ 2g(K)+2, \deg \Delta_K(t) +4 \}$. 
\end{thm}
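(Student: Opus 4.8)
The plan is to reduce the whole statement to properties of the Seifert matrix read off a flat basket diagram, together with one elementary compression for the genus term. Given a flat plumbing basket $F$ of $K$ with $n$ bands, order the bands $b_1,\dots,b_n$ by the cyclic order of the pages containing them, and take the basis $e_1,\dots,e_n$ of $H_1(F)$ with $e_i$ the core loop of $b_i$. Since $F$ is a disk with $n$ bands and $\partial F$ is connected, $\chi(F)=1-n=1-2g(F)$, so $n=2g(F)$ is even and the Seifert matrix $V=(\operatorname{lk}(e_i^{+},e_j))$ is $n\times n$. Flatness gives $V_{ii}=0$, and (choosing the push-off side appropriately) $\operatorname{lk}(e_i^{+},e_j)$ can only be nonzero when $b_j$ lies in a page after $b_i$, so $V_{ij}=0$ for $j<i$; thus $V$ is strictly upper triangular, with $V_{ij}\in\{0,\pm1\}$ for $i<j$, the value being $\pm1$ precisely when the arcs $a_i,a_j$ interleave along $\partial D$ — data encoded in the flat basket code $W$. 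Pinning down this normal form precisely (the signs, and the vanishing of the first column and last row) is the delicate point; everything below is formal.

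\emph{The bound $fpbk(K)\ge 2g(K)+2$.} For $K$ non-trivial, $F$ has at least one band, hence, by parity, at least two. The core loop $e_1$ of the lowest band $b_1$ is a non-separating simple closed curve on $F$ (a core of a $1$-handle), and it bounds an embedded disk $\delta\subset S^{3}$ obtained by sweeping the arc joining the two feet of $b_1$ across the pages between the spine page and the page of $b_1$; by the choice of $b_1$ no band meets this arc of pages, so $\operatorname{int}\delta\cap F=\emptyset$ and $\delta$ compresses $F$. Compressing $F$ along $\delta$ produces a Seifert surface of $K$ of genus $g(F)-1$, whence $g(K)\le g(F)-1=\tfrac12 fpbk(K)-1$, i.e.\ $fpbk(K)\ge 2g(K)+2$.

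\emph{The Alexander bounds.} Set $p(t)=\det(V-tV^{T})\doteq\Delta_K(t)$ and $r=\operatorname{corank}V$; strict upper-triangularity forces $\det V=0$, so $r\ge1$. Expanding $p$ by multilinearity in its columns as $\sum_{J}(-t)^{|J|}\det V^{(J)}$, where $V^{(J)}$ takes the columns of $V^{T}$ on $J$ and those of $V$ elsewhere, a rank count shows $\det V^{(J)}=0$ whenever $|J|<r$, so $\operatorname{ord}_{t=0}p\ge r$; combined with the palindromy $p(t)=t^{n}p(1/t)$ this gives $\deg\Delta_K=\operatorname{span}(p)\le n-2r\le n-2$ (here $\operatorname{span}(p)$ is the gap between the top and bottom exponents of $p$), hence $fpbk(K)=n\ge\deg\Delta_K+2$, which is case (1). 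If $r\ge2$ one already gets $\deg\Delta_K\le n-4$. If $r=1$, then $\ker V=\langle e_1\rangle$ and the left kernel of $V$ is $\langle e_n\rangle$ (the first column and last row of $V$ vanish), so $\operatorname{adj}V=\mu\,e_1e_n^{T}$ with $\mu$ equal, up to sign, to the product of the superdiagonal entries of $V$; since $r=1$ forces $\operatorname{adj}V\neq0$, that product is $\pm1$ and $\mu=\pm1$. Then the coefficient of $t$ in $p$ is $c_1=-\operatorname{tr}(\operatorname{adj}(V)V^{T})=-\mu V_{1n}\in\{0,\pm1\}$: if $c_1\neq0$ then $\deg\Delta_K=n-2$ and the leading coefficient is $a=\pm c_1=\pm1$, while if $c_1=0$ then $\operatorname{ord}_{t=0}p\ge2$ and $\deg\Delta_K\le n-4$. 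In either subcase, $a\neq\pm1$ forces $\deg\Delta_K\le n-4$, i.e.\ $fpbk(K)\ge\deg\Delta_K+4$, which is case (2).

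The main obstacle is the first step: it is the triangular normal form of $V$ that simultaneously guarantees $\det V=0$ (so the Alexander estimate always gains $2$) and controls the leading coefficient in the corank-$1$ case (so it gains a further $2$ exactly when $a\neq\pm1$). Once that form — including the vanishing of the first column and last row and the $\{0,\pm1\}$ entries — is rigorously established from the flat basket code, the genus estimate is the single compression above and the Alexander estimates are routine bookkeeping on the order of vanishing of $\det(V-tV^{T})$ at $t=0$.
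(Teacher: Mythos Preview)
Your proof is correct and structurally matches the paper's: both rest on the strictly triangular form of the Seifert matrix $V$ with entries in $\{0,\pm1\}$ (this is the paper's Lemma~\ref{lem:Seifert-from-code}) and on the observation that the core of the lowest band bounds a compressing disk for $F$, yielding $2g(K)+2\le fpbk(K)$. The difference is in how the Alexander bound is extracted. The paper argues more directly: in the permutation expansion of $\det(V-tV^{T})$ the diagonal vanishes, and the \emph{only} permutation contributing a term of the extreme degree $n-1$ (resp.\ degree $1$) is the long cycle, whose coefficient is, up to sign, the product of the super/sub-diagonal entries of $V$ together with the corner entry --- hence $0$ or $\pm1$; palindromy then gives both cases immediately. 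Your corank/adjugate route recovers exactly the same product (your $c_{1}=-\mu V_{1n}$ with $\mu=\pm\prod_{i}V_{i,i+1}$ is precisely this quantity), but passes through a case split on $r=\operatorname{corank}V$ that the paper's one-permutation observation avoids. What your formulation does buy is the clean general inequality $\deg\Delta_{K}\le n-2r$, which could in principle sharpen the estimate whenever the flat-basket Seifert matrix has corank at least $2$; the paper does not isolate this.
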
%

\begin{rem}\label{rmk:lowerbound}
(a) 
For a  general knot $K$, $2g(K) \geq \deg \Delta_K(t)$. 
However if  $K$ is alternating \cite{Murasugi} 
or  the minimal crossing number of $K$ 
is at most $10$, then $2 g(K)= \deg \Delta_K(t)$.
Therefore if such a knot has non-monic Alexander polynomial 
as in (2), we have $fpbk(K) \geq \deg \Delta_K(t) + 4 > 2g(K)+2 $.
For example, $g(5_2) = 1$ and $\Delta_{5_2} (t) = -2t^2 + 3 t -2$. 
By the above Theorem 
$fpbk(5_2) \geq \max \{ 4, 6 \}$, namely we found a flat plubming basket for 
$5_2$ whose number of bands is $6$.    \\
(b) The flat plumbing basket number is not additive under connected sum. 
For example, $fpbk(3_1) = fpbk(3_1^*) = 4$, but 
$fpbk(3_1 \# 3_1^*) \leq 6$ as shown on the left of Figure \ref{fig:3-1+3-1}.  
\end{rem}

\newpage

\section{Obtaining a flat plumbing basket presentation \\
from a Seifert surface} 

In \cite{FHK}, Theorem \ref{thm:existence} is proved and  
an algorithm to obtain a flat basket code from a closed braid presentation of a link
is introduced. 
In this section, we obtain a flat plumbing basket presentation 
of a link by using a certain canonical form of the Seifert surface of the link. 

\begin{figure}[hbtp]
\includegraphics[height=2cm]{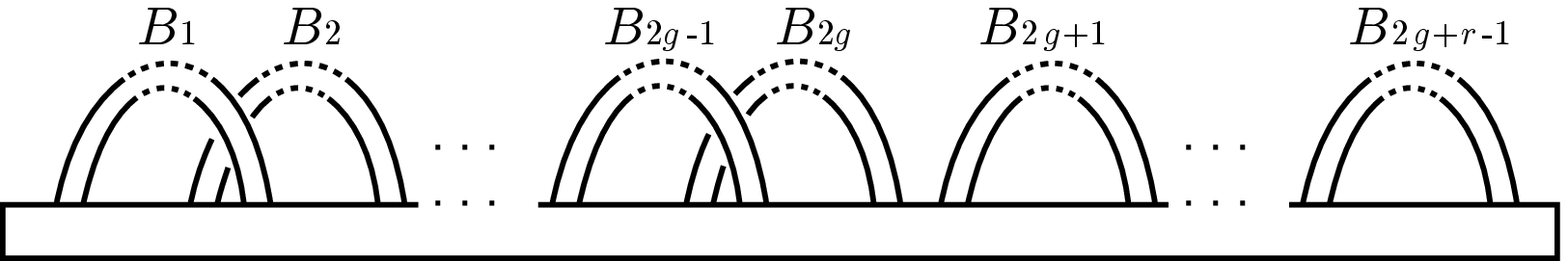}
\caption{
%
}
\label{fig:seifertform}
\end{figure}
\begin{figure}[hbtp]
\includegraphics[height=2cm]{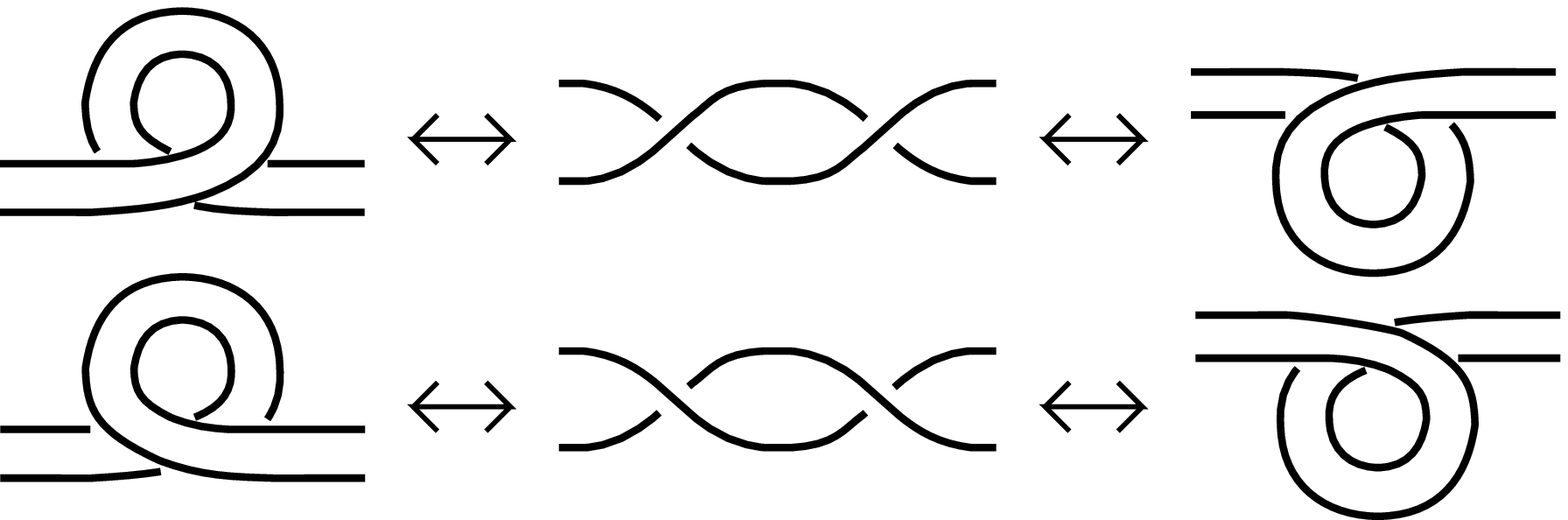}
\caption{
%
}
\label{fig:twistband}
\end{figure}
In general, a Seifert surface $S$ of a link $L$ with $r$ components 
is constructed from a 2-disk $D$ by attaching $2g+r-1$ bands 
$B_1,\ldots, B_{2g+r-1}$ as shown in Figure \ref{fig:seifertform}. 
The bands $B_i$ can be knotted, linked and twisted. 
By deforming the twists of the bands $B_i$ as shown in Figure \ref{fig:twistband}, 
we make the bands $B_i$ untwisted. 
We project this Seifert surface on the $x$-$y$ plane 
such that the image of the 2-disk $D$ is 
$\{ (x,y) \in \mathbb{R}^2 \, | \, 0 \leq x \leq 1, -1 \leq y \leq 0 \}$, 
and, in order to simplify our arguments and figures, 
indicate the bands by the core curves of bands with some normal crossings. 
We call this projection of $S$ the {\em normal form\/} of $S$. 

We isotope $S$ such that the image of its projection onto $x$-$y$ plane satisfy: \\
(1) The image of bands are the union of lines parallel to the $x$-axis 
or $y$-axis. For short, we call the line parallel to the $x$-axis (resp. $y$-axis) 
{\em the $x$-line\/} (resp. {\em the $y$-line\/}). \\
(2) On each crossing of the image of bands, the over-crossing is on 
the $x$-line and the under-crossing is on the $y$-line. \\
(3) The $x$-coordinates of the $y$-lines are distinct, 
and the $y$-coordinates of the $x$-lines are distinct. \\
For (2), we often use the deformation in Figure \ref{fig:cross}. \par
\begin{figure}[hbtp]
\includegraphics[height=3cm]{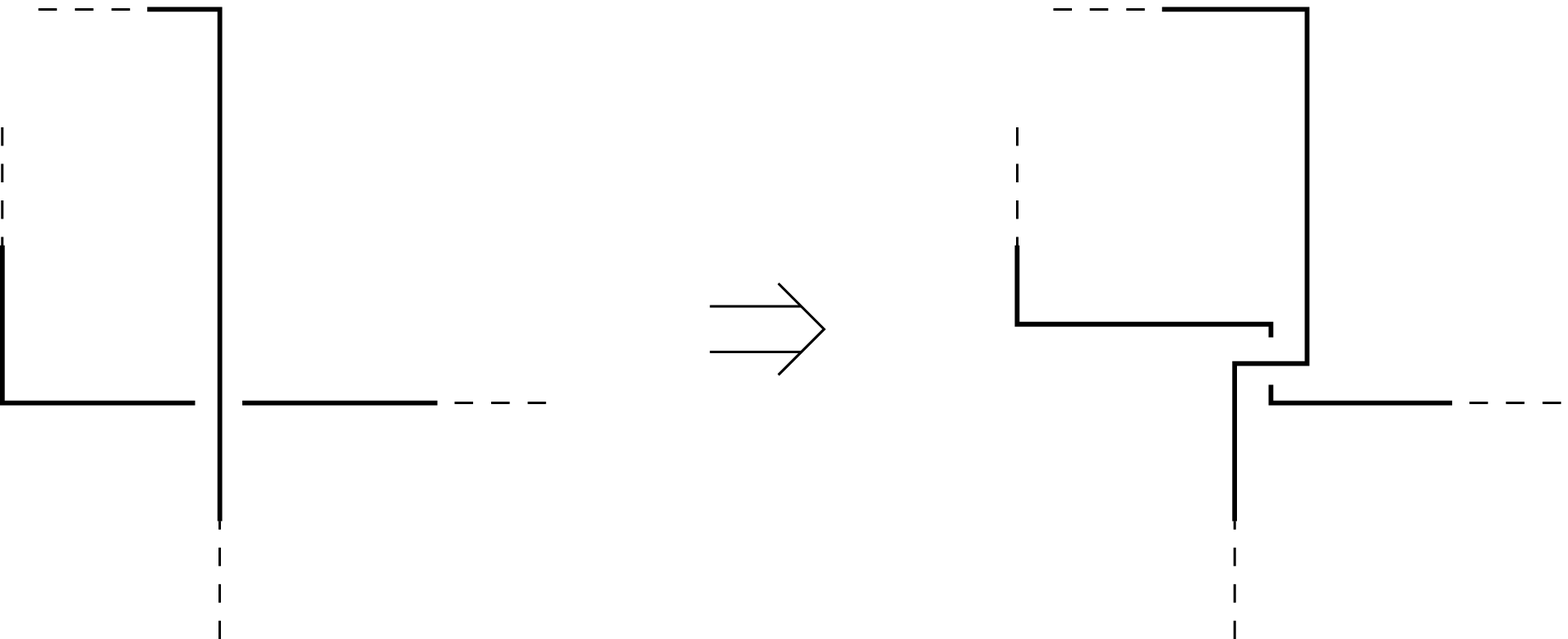}
\caption{
%
}
\label{fig:cross}
\end{figure}
There are four types (a), (b), (c) and (d) of $x$-lines 
according to the positions of adjacent $y$-lines as shown 
in Figure \ref{fig:h-classify}. 
If all $x$-lines are type (a), then this surface $S$ is 
a flat plumbing basket. 
If there are some $x$-lines which are not type (a), 
then we alter $S$ as shown in Figure \ref{fig:pushdown-interval}. 
By this alteration, the genus of $S$ is increased by one, 
but $\partial S = L$ is not changed. 
We call this operation {\em push-down}. 
After we alter all part of $x$-lines where adjacent $y$-lines go above 
by push-downs, 
we get a flat plumbing basket of $L$.  
\begin{figure}[hbtp]
\includegraphics[height=3cm]{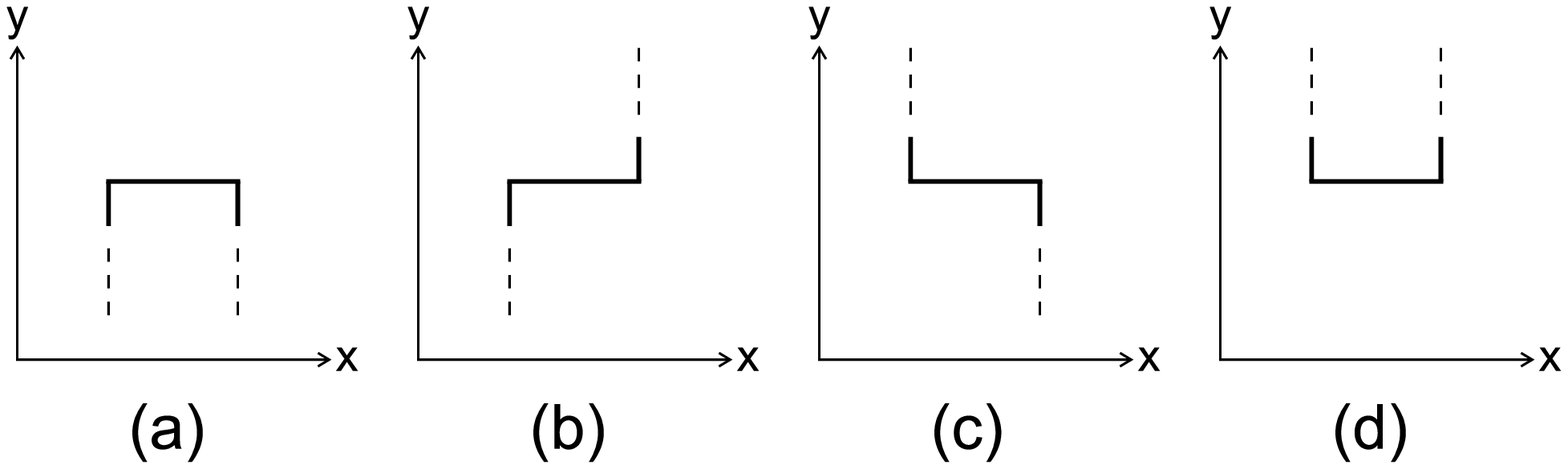}
\caption{
%
}
\label{fig:h-classify}
\end{figure}
\begin{figure}[hbtp]
\includegraphics[height=3cm]{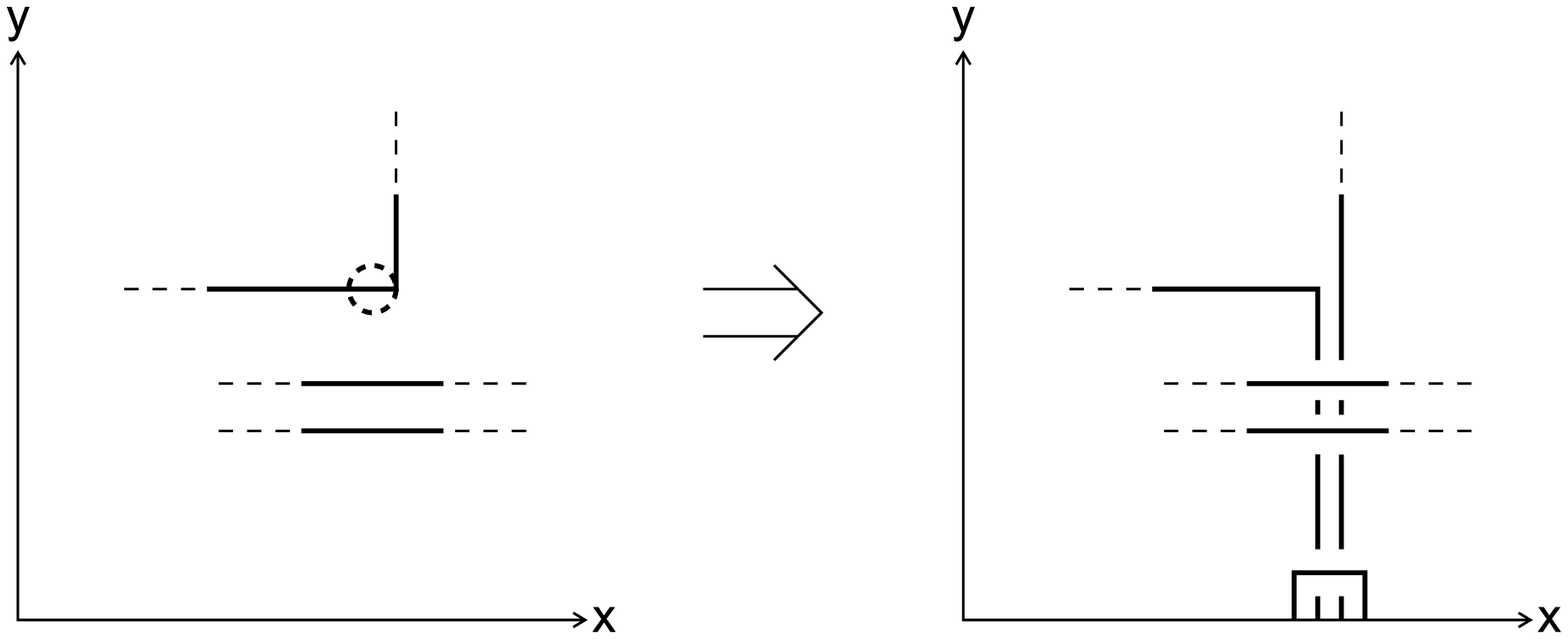}
\caption{
%
}
\label{fig:pushdown-interval}
\end{figure}
\section{Proof of Theorem \ref{thm:f.p.b.eq}} 

For two links $L$ and $L'$, if there are flat basket diagram 
$\mathcal{D}$ and $\mathcal{D}'$ for $L$ and $L'$ respectively 
such that $\underline{\mathcal{D}}=\underline{\mathcal{D}'}$, 
then $L$ and $L'$ are pass-equivalent. 
This means that if 
$L$ and $L'$ are f.p.b.-equivalent then $L$ and $L'$ are pass-equivalent. 

Kauffmann(knot case) \cite{Kauffman}, Murakami and Nakanishi (link case) \cite{MN} 
showed the following theorem: 
\begin{thm}\cite{Kauffman} \cite{MN} \label{thm:KMN} 
%
Any link is pass equivalent to the one of the following links, \\
$I_n$ : a trivial link with $n$ components, \\
$II_n$ : a split sum of a trefoil knot and $I_{n-1}$, \\
$III_{d,n}$ : a link shown in Figure \ref{fig:nonproperlink}. 
\end{thm}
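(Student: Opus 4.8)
The plan is to prove the classification in two stages: first produce complete pass-equivalence invariants and check that the three families realize exactly their possible values, so that no two of the listed links are pass-equivalent; then show by a normal-form reduction that every link is pass-equivalent to one of them.

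For the invariants I would track three quantities. The number of components $n$ is visibly unchanged by a pass move. Next, a pass move changes each pairwise linking number $lk(L_i,L_j)$ by an even integer (as one reads off Figure~\ref{fig:passmove}), hence preserves all mod-$2$ linking numbers and in particular the parities $\varepsilon_i=\sum_{j\ne i}lk(L_i,L_j)\bmod 2$; since $\sum_i\varepsilon_i\equiv 2\sum_{i<j}lk(L_i,L_j)\equiv 0$, the number of indices with $\varepsilon_i=1$ is even, and half of it is a pass-invariant $d$, with $d=0$ precisely for proper links. Finally, on proper links the Arf invariant $\mathrm{Arf}(L)\in\mathbb Z/2$ is defined, and Kauffman's computation shows it is preserved: either from the effect of the move on a Seifert matrix (it alters $V$ by a congruence and a hyperbolic summand, leaving the mod-$2$ quadratic form $x\mapsto x^{\mathsf T}Vx$ unchanged up to isomorphism) or from the description of $\mathrm{Arf}$ through the Conway polynomial together with a direct skein check. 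As $I_n$ and $II_n$ are proper with $\mathrm{Arf}=0$ and $1$ and the $III_{d,n}$ are non-proper with parameter $d$, the list is irredundant.

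For the realization direction I would work with the normal form of a Seifert surface from Section~2, presenting $L$ as the boundary of a disk with untwisted bands; there a pass move is literally an isotopy pushing one band through another (together with the creation or cancellation of a canceling pair of bands), so pass moves act very concretely on the surface. By repeatedly passing bands through one another and dragging them around, I would bring the surface to a standard form built from elementary clasped-band pieces — of trefoil type and figure-eight type — plus a pattern of bands recording the inter-component linking. Then I would simplify using three local relations realized by explicit pass moves: a figure-eight-type piece is pass-equivalent to a trivial band (since $\mathrm{Arf}$ of the figure-eight knot is $0$), trefoil-type pieces cancel in pairs (the relation $\mathrm{Arf}(3_1\#3_1)=0$), and inter-component linking reduces mod $2$ to the minimal pattern recorded by $d$. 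What survives is $III_{d,n}$ when $d\ge1$, and when $d=0$ either no trefoil piece (giving $I_n$) or exactly one (giving $II_n$), in accordance with $\mathrm{Arf}(L)$.

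The main obstacle is the completeness of the invariants, i.e. the realization direction rather than the invariance direction, and at its core the knot case: every knot is pass-equivalent to the unknot or the trefoil according to its Arf invariant. I expect the delicate points to be (a) checking that the band manipulations above can always be arranged so that every component individually becomes unknotted and that only the three advertised relations are needed to finish, and (b) producing the explicit pass moves realizing those relations, especially $\mathrm{Arf}(3_1\#3_1)=0$. Layered on top for the link case is the bookkeeping for non-proper components: one must show excess odd linking can be routed between components and cancelled in pairs, so that only the single number $d$ survives. Keeping all these cancellations honest — ideally carried out entirely on the normal-form surface, where each pass move is a visible band-through-band isotopy — is where the real work lies; the invariance half is routine once the Arf computation sketched above is carried out.
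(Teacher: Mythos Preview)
The paper does not prove this theorem at all: Theorem~\ref{thm:KMN} is quoted as a known result from \cite{Kauffman} (knot case) and \cite{MN} (link case), with no argument given. It is used as a black box in the proof of Theorem~\ref{thm:f.p.b.eq}; the paper's own work in Section~3 is Lemma~\ref{lem:f.p.b.eq-KMN}, which establishes the analogous statement for f.p.b.-equivalence. So there is no ``paper's proof'' of this statement to compare your proposal against.

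That said, your outline is broadly the shape of the arguments in the cited references: pass-invariance of the component number, the mod-$2$ linking data, and the Arf invariant on proper links, followed by a Seifert-surface normal-form reduction using band-through-band moves. You correctly flag the genuine content as the realization direction, in particular the knot case (unknot versus trefoil governed by Arf) and the cancellation $3_1\#3_1\sim$ unknot under pass moves. If you want to turn this into an actual proof rather than a plan, those are exactly the places where you would need to supply explicit move sequences; the invariance side is, as you say, routine. But for the purposes of this paper none of that is required --- the result is simply imported.
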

\begin{figure}[hbtp]
\includegraphics[height=2cm]{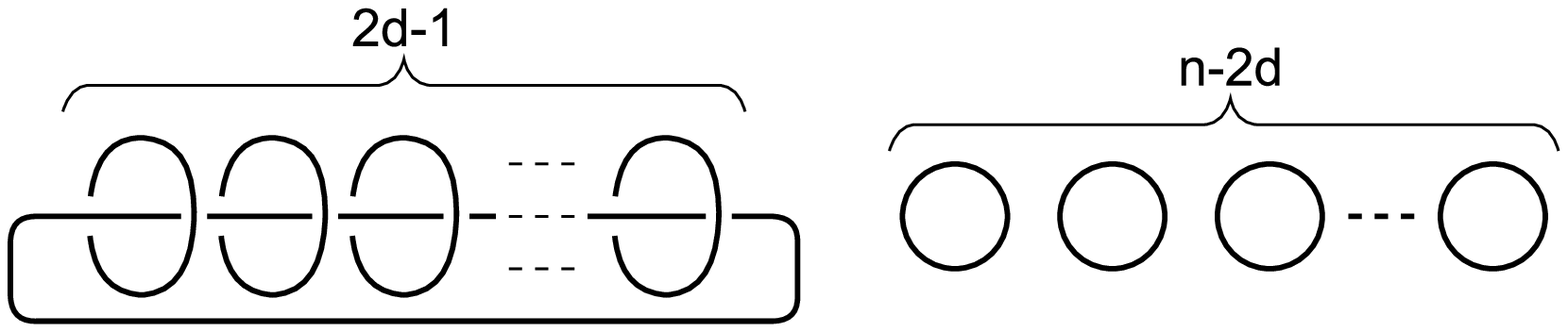}
\caption{
%
}
\label{fig:nonproperlink}
\end{figure}
In order to show that if $L$ and $L'$ are pass-equivalent then 
$L$ and $L'$ are f.p.b.-equivalent, we show: 
\begin{lem} \label{lem:f.p.b.eq-KMN}
%
Any link is f.p.b-equivalent to $I_n$, $II_n$ or $III_{d,n}$. 
\end{lem}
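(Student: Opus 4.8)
The plan is to deduce the Lemma from the following local statement: \emph{if a link $L'$ is obtained from a link $L$ by a single pass move, then $L$ and $L'$ are f.p.b.-equivalent.} Granting this, the Lemma is immediate: by Theorem \ref{thm:KMN} every link is joined to one of $I_n$, $II_n$, $III_{d,n}$ by a finite sequence of pass moves, hence by a finite chain of f.p.b.-equivalences, and f.p.b.-equivalence is transitive by definition.

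Two elementary observations about flat basket diagrams drive the proof of the local statement. (i) If two flat basket diagrams have the same underlying diagram, their boundary links are f.p.b.-equivalent by definition; thus we may apply an arbitrary permutation to the labels of the bands of a flat plumbing basket at the cost of an f.p.b.-equivalence. (ii) In a flat basket diagram two bands cross in the projection exactly when their core arcs are interleaved in the disk, and at such a crossing the over-strand belongs to the band of larger index; hence transposing two consecutive labels $i \leftrightarrow i+1$ alters the diagram only at the crossings between $b_i$ and $b_{i+1}$, each of which is reversed, and leaves the diagram unchanged if those two bands do not cross. Since the two boundary arcs of any band of a Seifert surface run antiparallel, reversing a single band crossing is precisely a pass move on the boundary link; this is a direct comparison with Figure \ref{fig:passmove} after tracking the orientations of the four strands.

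It remains --- and this is the main obstacle --- to exhibit an arbitrary pass move on $L$ as a transposition of consecutive labels for some flat plumbing basket of $L$. Suppose the pass move occurs in a ball $B$; inside $B$ the four strands bound an obvious piece of surface, a disk together with two clasped bands that may be drawn so their cores cross exactly once. Cap off $L \setminus B$ by a Seifert surface agreeing with this piece along $\partial B$, obtaining a Seifert surface $S$ of $L$ in which the clasped pair is a distinguished subsurface meeting the rest of $S$ only along $\partial B$. Run the procedure of Section 2 --- reduction to the normal form of Figure \ref{fig:seifertform} followed by push-downs --- arranged so that the two distinguished bands become two bands of the resulting flat plumbing basket $F$ of $L$ carrying consecutive indices; since push-downs only insert new bands locally, they can be carried out away from the distinguished pair, whose cores remain interleaved and cross exactly once. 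Transposing the labels of these two bands gives a flat basket diagram $F'$ with the same underlying diagram as $F$, so $\partial F'$ is f.p.b.-equivalent to $\partial F = L$ by observation (i); and by observation (ii) the link $\partial F'$ is obtained from $L$ by reversing exactly that one band crossing, i.e. by the given pass move, so $\partial F' = L'$. Hence $L$ and $L'$ are f.p.b.-equivalent. One can instead phrase the entire argument combinatorially, reducing an arbitrary flat basket code to one of the standard codes for $I_n$, $II_n$, $III_{d,n}$ using only relabelings; either way the delicate point is the compatibility of the Section 2 normalization with a prescribed pair of bands.
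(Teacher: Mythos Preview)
Your overall strategy---reduce everything to the local statement ``a single pass move is an f.p.b.-equivalence'' and then invoke Theorem~\ref{thm:KMN}---is cleaner than the paper's route. The paper proves the analogous local statement (a band crossing change on a Seifert surface gives an f.p.b.-equivalence) and then, rather than citing Theorem~\ref{thm:KMN}, \emph{reproves} the classification by hand: it uses band crossing changes to unknot and unlink the bands, reduce curls modulo~$2$, cancel pairs of trefoil summands via the explicit diagram in Figure~\ref{fig:3-1+3-1}, and finally sorts out the remaining cases with Figures~\ref{fig:link-classify}--\ref{fig:2d-1}. Your shortcut would avoid all of that case analysis.

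However, your proof of the local statement has a real gap at precisely the spot you label ``delicate.'' After running the Section~2 procedure there is no reason the two distinguished bands can be given \emph{consecutive} labels: the labeling of a flat plumbing basket must be a linear extension of the partial order imposed by the crossings, and nothing prevents a third band $c$ from being forced above one distinguished band and below the other. Nor is it clear the distinguished pair still crosses exactly once, since Section~2 mandates a push-down at \emph{every} non-type-(a) interval of \emph{every} $x$-line, including those on the distinguished bands themselves whenever they are knotted or linked with the rest of the surface. Your assertion that push-downs ``can be carried out away from the distinguished pair'' is exactly what needs proof and is not supplied.

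The paper closes this gap by a different mechanism that does not require consecutive labels. After producing a flat plumbing basket $S_1$ it performs two \emph{extra} push-downs on the $x$-line $H$ through the crossing $Q$, immediately to the left and right of $Q$ (Figure~\ref{fig:changeonecross}), obtaining $S_2$. These isolate $Q$ so that reversing the crossing there still yields a valid flat plumbing basket $S_2'$ with the same underlying diagram as $S_2$; hence $L=\partial S_2$ and $L'=\partial S_2'$ are f.p.b.-equivalent straight from the definition. If you plug this argument in for your local lemma, your appeal to Theorem~\ref{thm:KMN} then gives a complete and shorter proof than the paper's.
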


\begin{proof}
\begin{figure}[hbtp]
\includegraphics[height=2cm]{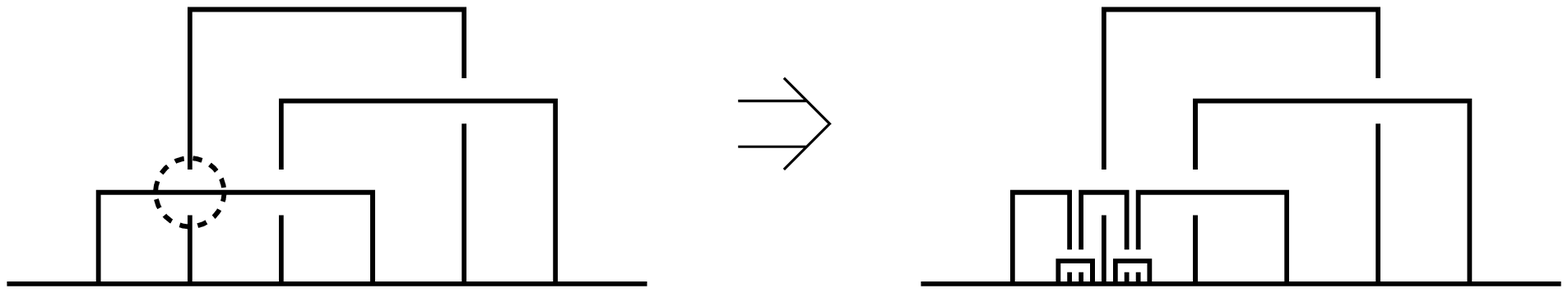}
\caption{
%
}
\label{fig:changeonecross}
\end{figure}
At first, we observe that 
if there are Seifert surfaces $S$ and $S'$ of normal form for $L$ and $L'$ 
respectively such that $S$ is obtained from $S'$ by a crossing change 
at a pair of bands of $S$, then $L$ and $L'$ is f.p.b.-equivalent. 
We deform $S$ such that the projection of $S$ satisfies the conditions 
(1) (2) and (3) in \S 2. 
Let $Q$ be a crossing point where we should change crossing 
in order to obtain $S'$. 
We perform push-downs as explaind in \S 2 and 
obtain a flat plumbing basket $S_1$ for $L$. 
Let $H$ be the $x$-line in the projection image of $S_1$ which contains $Q$. 
We choose points $P_{l_1}, P_{l_2}, P_{r_1}, P_{r_2}$ on $H$ such that 
$P_{l_1}, P_{l_2}, Q, P_{r_1}, P_{r_2}$ are on $H$ in this order from 
the left to the right and there is no crossing between 
$P_{l_1}$ and $P_{r_2}$ except $Q$. 
We push-down the two lines on $H$ between $P_{l_1}, P_{l_2}$ and 
between $P_{r_1}, P_{r_2}$, 
then we get a flat plumbing basket $S_2$ for $L$ 
(see Figure \ref{fig:changeonecross}). 
Furthermore, a surface $S_2'$ obtained by changing the crossing $Q$ is 
a flat plumbing basket for $L'$. 
Therefore, $L$ and $L'$ are f.p.b.-equivalent. 

\begin{figure}[hbtp]
\includegraphics[height=1cm]{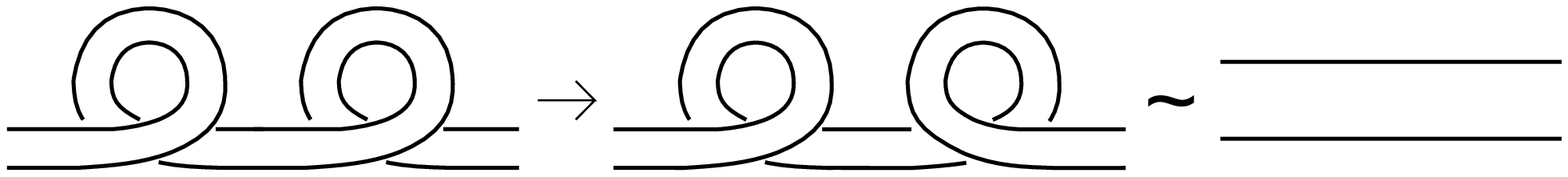}
\caption{
%
}
\label{fig:2-twist}
\end{figure}

Let $L$ be a link and $S$ be a Seifert surface for $L$ of normal form. 
By changing crossings of bands in $S$, bands in $S$ are untangled. 
By changing crossings of bands as is indicated in Figure \ref{fig:2-twist}, 
times of curls of bands in $S$ are changed to $0$ or $1$. 
By the observation in the previous paragraph, we see that 
$L$ is f.p.b.-equivalent to the boundary of the boundary connected sum of 
the sufaces shown in Figure \ref{fig:trivial-or-3-1}. 
\begin{figure}[hbtp]
\includegraphics[height=2cm]{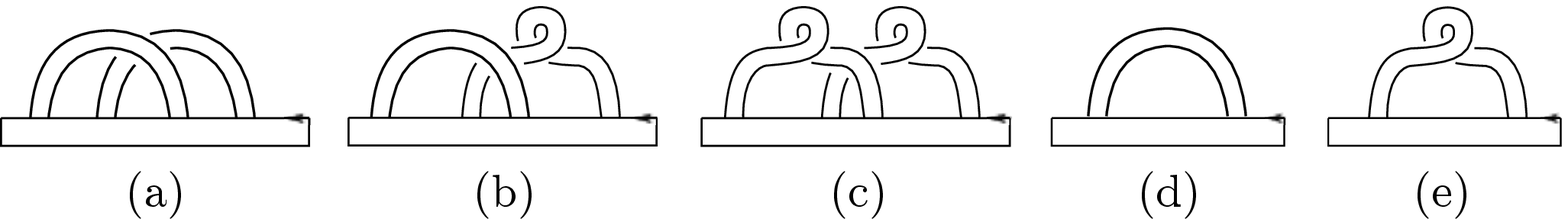}
\caption{
%
}
\label{fig:trivial-or-3-1}
\end{figure}
Since the boundaries of (a) and (b) are trivial knot, 
we remove these part by the isotopy of $L$. 
If there is a pair of (c), then $L$ contains $3_1 \# 3_1$ in its connected 
sum decomposition. 
By changing the crossing of bands in one of the pair of (c), $3_1 \# 3_1$ is altered 
to $3_1 \# 3_1^*$. 
\begin{figure}[hbtp]
\includegraphics[height=2cm]{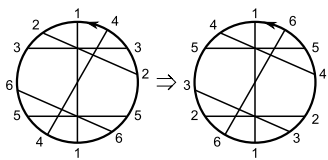}
\caption{
%
$3_1 \# 3_1^*$ is f.p.b.-equivalent to the trivial knot. 
}
\label{fig:3-1+3-1}
\end{figure}
Figure \ref{fig:3-1+3-1} indicate that $3_1 \# 3_1^*$ is f.p.b.-equivalent to 
the trivial knot. Therefore, $L$ is f.p.b.-equivalent to 
one of the links listed in Figure \ref{fig:link-classify}. 
\begin{figure}[hbtp]
\includegraphics[height=7cm]{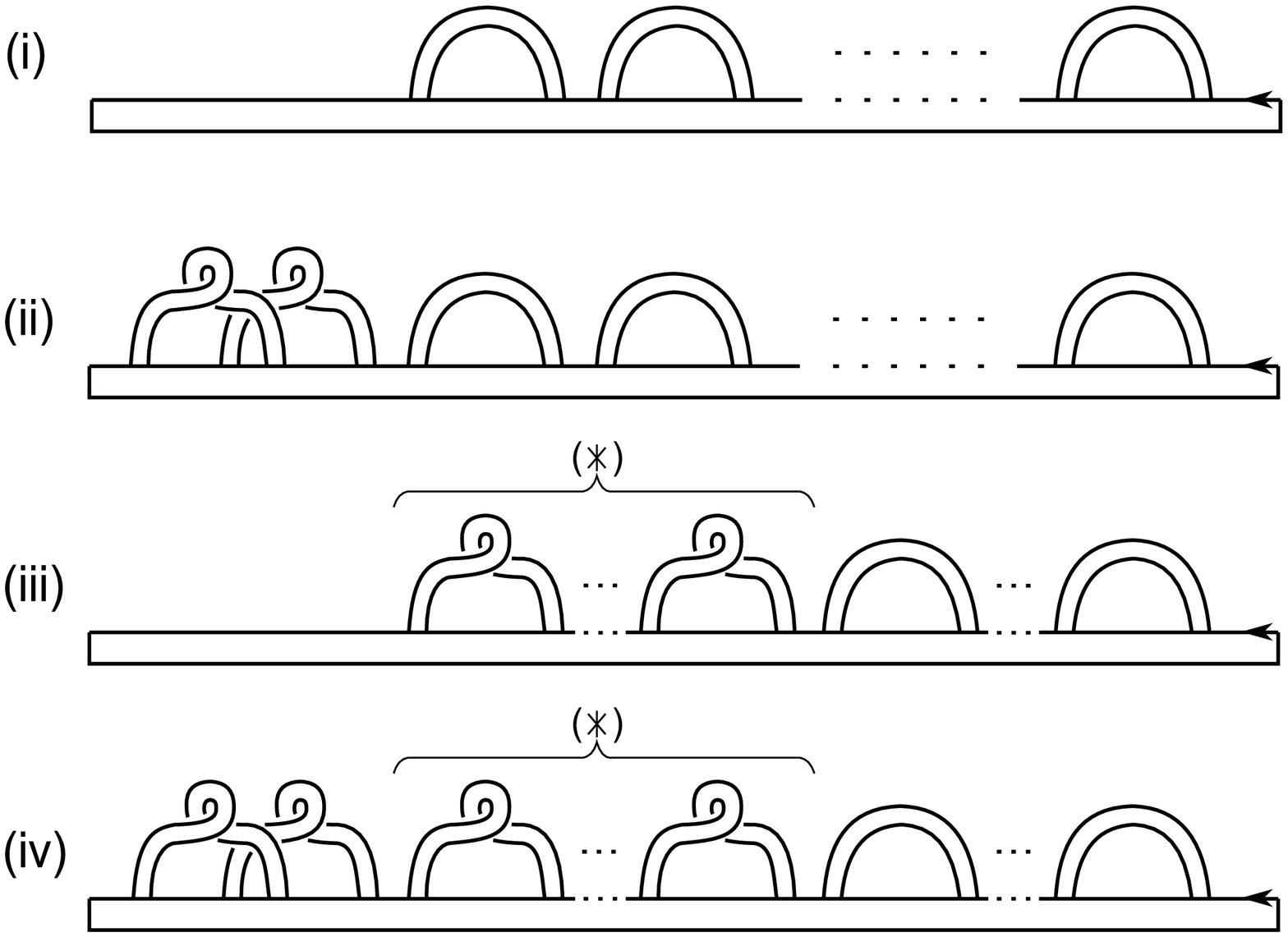}
\caption{
%
}
\label{fig:link-classify}
\end{figure}
Figure \ref{fig:iv-iii} shows that (iv) is f.p.b.-equivalent to (iii). 
Hence, we consider (i) (ii) and (iii). 
\begin{figure}[hbtp]
\includegraphics[height=4cm]{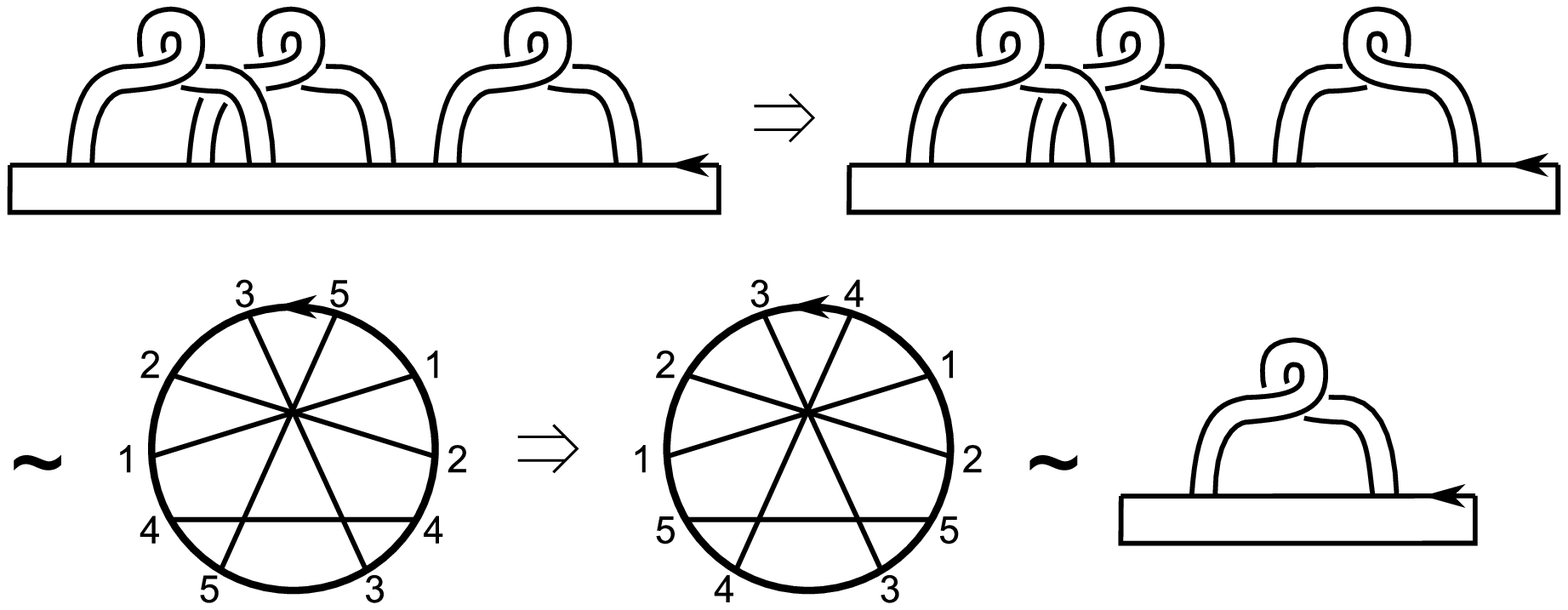}
\caption{
%
}
\label{fig:iv-iii}
\end{figure}
The boundaries of (i) and (ii) are $I_n$ and $II_n$ respectively. 
The boundaries of (iii) is $III_{d,n}$, if (*) is odd. 
If (*) is even, we change crossing of bands as shown in Figure \ref{fig:hopflink} 
and deform as shown in Figure \ref{fig:2d-1}, then (*) is changed to odd. 
\begin{figure}[hbtp]
\includegraphics[height=1.3cm]{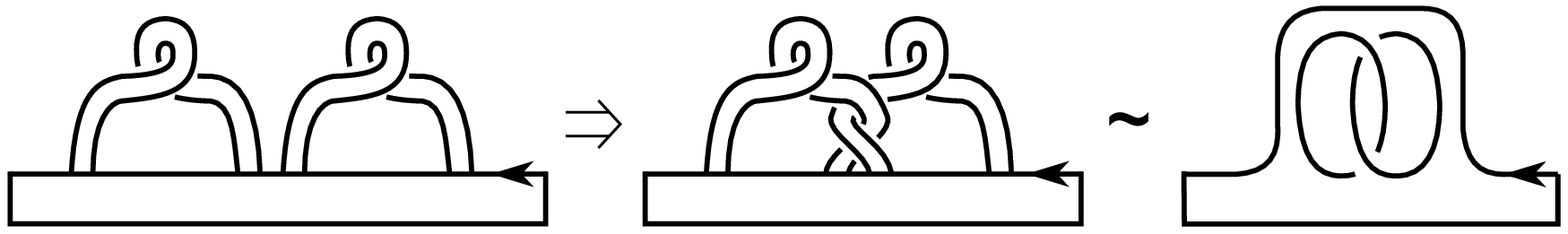}
\caption{
%
}
\label{fig:hopflink}
\end{figure}
\begin{figure}[hbtp]
\includegraphics[height=9cm]{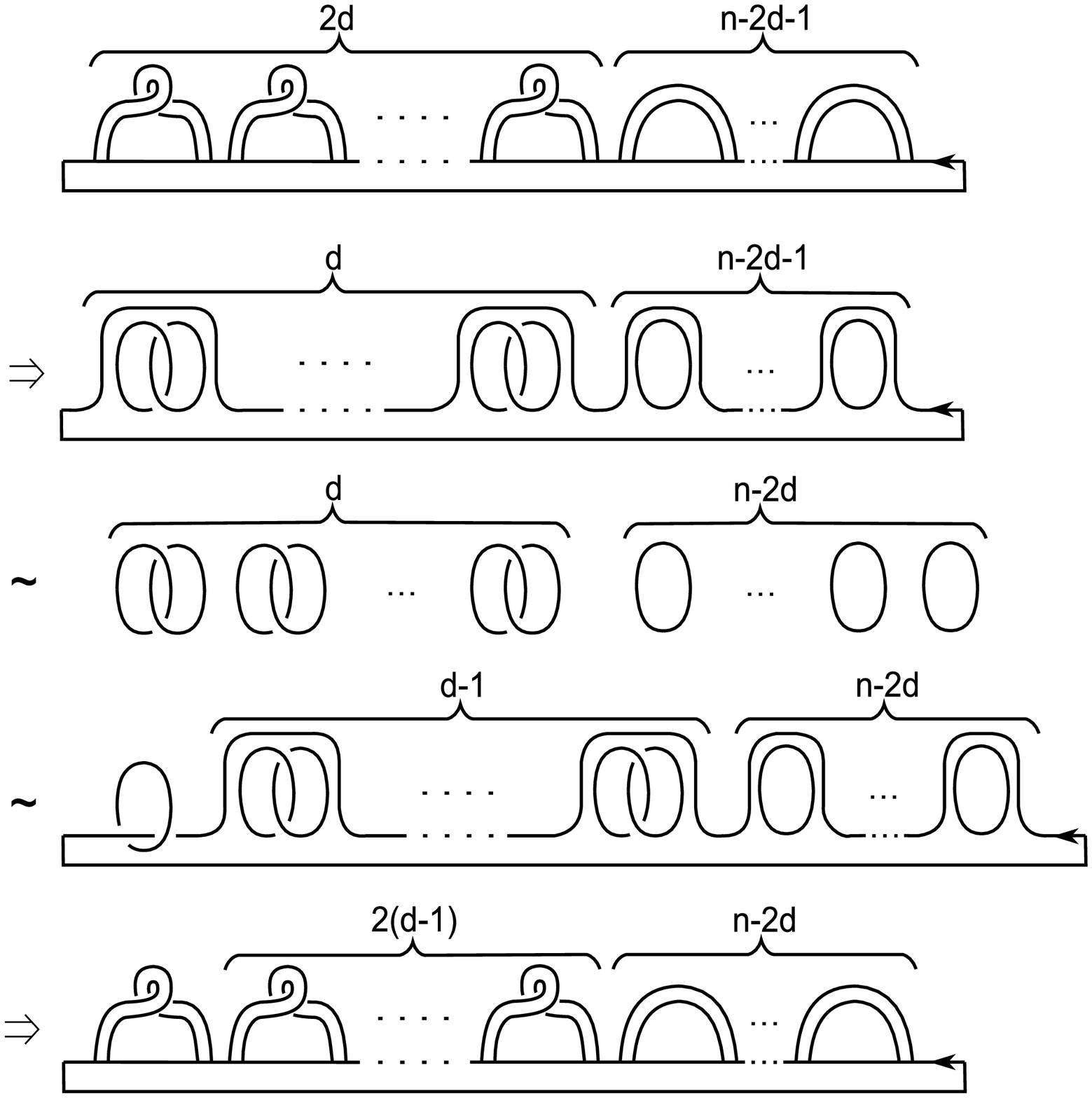}
\caption{
%
}
\label{fig:2d-1}
\end{figure}
\end{proof}

\section{Alexander polynomial 
and flat plumbing basket number} 

In this section, we introduce a method to obtain the Seifert matrix of 
the flat plumbing basket and 
the Alexander polynomial of the link from 
the flat basket code, and prove Theorem \ref{thm:lowerbound} 
on a lower bound for the flat plumbing basket number $fpbk(K)$ of $K$ 
coming from the Alexander polynomial of $K$ and the three genus of $K$. 

We review the definition of the Seifert matrix and the Alexander polynomial. 
Let $S$ be a Seifert surface for a knot $K$, 
and $\{ k_1, \ldots, k_{2n} \}$ be a basis for $H_1(S, \mathbb{Z})$. 
Let $v_{i,j}= Link(k_i^{+},k_j)$, where $k_i^{+}$ is a cycle in $S^3$ obtained by 
pushing $k_i$ into $S^3 \setminus S$ by the positive normal direction of $S$, 
and $Link$ is the linking number. 
The $2n \times 2n$ matrix $V = (v_{i,j})$ is the {\em Seifert matrix} 
of $S$. 
The polynomial $\Delta_K(t) = \det(V - t V^{T})$, 
where $V^{T}$ is the transpose of $V$, 
is the {\em Alexander polynomial} of $K$. 

Let $F$ be the flat plumbing basket whose flat basket code is a word $W$ 
in $\{1,2,\ldots,2n\}$, $p_i$ be a point on $\partial D$ 
corresponding to the former $i$ in $W$ 
and $q_i$ be a point on $\partial D$  corresponding to the latter $i$ in $W$. 
We define a cycle $k_i$ on $F$ to be constructed from the line on $D$ oriented 
from $q_i$ to $p_i$ and a core arc of $b_i$ oriented from $p_i$ to $q_i$. 
Then $\{k_1, \ldots, k_{2n} \}$ is a basis of $H_1(F, \mathbb{Z})$. 
We see: 

\begin{lem} \label{lem:Seifert-from-code}
%
Let $W$ be a flat basket code, $F$ be a flat plumbing basket whose 
flat basket code is $W$, and  $\{k_1, \ldots, k_{2n} \}$ be a basis 
of $H_1(F, \mathbb{Z})$ given above. 
Then the $(i,j)$-entry $v_{i,j}$ of the Seifert matrix $V$ of $F$ is determined 
as follows.  
\\ 
We assume that $i < j$. \\
(1) If $W = ( \cdots i \cdots j \cdots j \cdots i \cdots)$ then 
$v_{i,j}= 0$ and $v_{j,i} = 0$. \\
(2) If $W = ( \cdots i \cdots j \cdots i \cdots j \cdots)$ then 
$v_{i,j}= 0$ and $v_{j,i} = -1$. \\
(3) If $W = ( \cdots i \cdots i \cdots j \cdots j \cdots)$ then 
$v_{i,j}= 0$ and $v_{j,i} = 0$. \\
(4) If $W = ( \cdots j \cdots i \cdots i \cdots j \cdots)$ then 
$v_{i,j}= 0$ and $v_{j,i} = 0$. \\
(5) If $W = ( \cdots j \cdots i \cdots j \cdots i \cdots)$ then 
$v_{i,j}= 0$ and $v_{j,i} = 1$. \\
(6) If $W = ( \cdots j \cdots j \cdots i \cdots i \cdots)$ then 
$v_{i,j}= 0$ and $v_{j,i} = 0$. \qed
\end{lem}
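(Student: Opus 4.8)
The plan is to compute each linking number $v_{i,j}=\mathrm{Link}(k_i^+,k_j)$ directly from the geometry of the flat plumbing basket $F$, exploiting the very rigid structure of the cycles $k_i$: each $k_i$ consists of a chord on the page disk $D$ running from $q_i$ to $p_i$, closed up by the core arc of the band $b_i$ lying in its own distinct page. First I would set up the normal-direction convention: the positive side of $D$ is the side from which the bands are attached from bottom to top, so $k_i^+$ is the chord-plus-core pushed off in that direction. Because the bands sit in pairwise distinct pages of the trivial open book and only interact through their feet on $\partial D$, the linking number $\mathrm{Link}(k_i^+,k_j)$ can be read off from how the endpoints $p_i,q_i,p_j,q_j$ are cyclically interleaved along $\partial D$ — that is, from which of the six cyclic patterns (1)--(6) the word $W$ realizes.

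The key steps, in order: (a) Fix the orientations exactly as in the statement — $k_i$ runs along $D$ from $q_i$ to $p_i$ and back along the core of $b_i$ from $p_i$ to $q_i$ — and record the push-off $k_i^+$ on the positive side of $D$. (b) Observe that $k_i^+$ may be isotoped in $S^3\setminus F$ to a small circle consisting of the pushed-off chord together with an arc running just above $b_i$ in its page; since all pages are disks and distinct bands occupy distinct pages, $k_i^+$ bounds a disk $\Delta_i$ in $S^3$ obtained by capping the pushed-off chord by a band-parallel strip, and $\mathrm{Link}(k_i^+,k_j)$ equals the algebraic intersection $\Delta_i\cdot k_j$. (c) Because $k_j$ meets $D$ only in its chord and meets the band regions only in $b_j$'s page, the intersection $\Delta_i\cdot k_j$ reduces to: how many times (with sign) does the chord of $k_j$ pierce the capping disk of $k_i^+$. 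This in turn is governed purely by whether the pair $\{p_j,q_j\}$ separates the pair $\{p_i,q_i\}$ on the circle $\partial D$ (the chords cross) and, if so, by the relative order of the four points — which is precisely the data distinguishing patterns (2) and (5) from the nested/disjoint patterns (1), (3), (4), (6). (d) Carry out the sign bookkeeping in the two crossing cases: in case (2), $W=(\cdots i\cdots j\cdots i\cdots j\cdots)$, the chords link so as to give $v_{j,i}=-1$ while the push-off direction kills $v_{i,j}$; in case (5) the roles are reversed in cyclic order and the sign flips to $v_{j,i}=+1$. In the four non-crossing cases the chords are either nested or disjoint, their capping disks can be made disjoint from the other cycle, and both entries vanish.

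I expect the main obstacle to be step (d): pinning down the \emph{signs} $\pm 1$ correctly and consistently. The vanishing assertions in cases (1), (3), (4), (6) are essentially immediate once one sees the chords do not interleave, and the vanishing of $v_{i,j}$ (as opposed to $v_{j,i}$) in cases (2) and (5) follows from pushing off on the correct side of $D$. But distinguishing $v_{j,i}=-1$ from $v_{j,i}=+1$ requires a careful, figure-level analysis of the orientation of the chord of $k_j$ relative to the co-oriented capping disk of $k_i^+$, together with the counterclockwise reading convention for $W$; it is exactly the kind of computation where an overlooked sign convention (orientation of $D$, choice of positive normal, direction of travel along $\partial D$) propagates. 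My approach here would be to verify the signs on a single explicit small example in each of the two crossing cases — e.g.\ the Hopf-link and trefoil flat baskets, whose Seifert matrices are known — and then observe that the local picture near the four feet $p_i,q_i,p_j,q_j$ is model-independent, so the signs computed on the examples are the signs in general.
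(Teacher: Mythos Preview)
Your proposal is correct and essentially constitutes a full proof; the paper itself gives no proof at all --- the lemma is stated and immediately closed with a \qed, so there is nothing to compare against beyond the implicit claim that the computation is routine. Your plan (capping $k_i^+$ by a disk parallel to the band, reducing $\mathrm{Link}(k_i^+,k_j)$ to whether the chords interleave on $\partial D$, and fixing the two signs by a model example) is exactly the standard way to justify such a statement, and your identification of the sign bookkeeping in step~(d) as the only delicate point is accurate.
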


\begin{proof} [Proof of Theorem \ref{thm:lowerbound}. ] 
By the definition of $\Delta_K(t)$, 
$$
\Delta_K(t) =  
\begin{vmatrix}
0 & -v_{21} t & -v_{31} t & \cdots & -v_{2n, 1} t \\
v_{21} & 0 & -v_{32} t & \cdots & -v_{2n,2} t \\
\vdots & \vdots & \vdots & \ddots & \vdots \\
v_{2n, 1} & v_{2n, 2} & v_{2n,3} & \cdots  & 0  
\end{vmatrix}
$$
Let $S_{2n}$ be the symmetric group of degree $2n$, 
$\epsilon(\sigma)$ be the signature of $\sigma \in S_{2n}$, 
and $a_{ij}$ be the $(i,j)$-th entry of $V - t V^T$. 
Then 
$\Delta_K(t) = \Sigma_{\sigma \in S_{2n} }
\epsilon(\sigma) a_{1 \sigma(1)} a_{2 \sigma(2)} \cdots a_{2n \sigma(2n)} $. 
This show that the degree of  $\Delta_K(t)$ is at most $2n-1$ and at least $1$, 
and the term with degree $2n-1$ is 
$- v_{21} v_{32} \cdots v_{2n, 2n-1} v_{2n,1} t^{2n-1}$ 
and the term with degree $1$ is 
$- v_{21} v_{32} \cdots v_{2n, 2n-1} v_{2n,1} t$. 
Since $v_{ij} = 0$ or $\pm 1$, 
the term of $\Delta_K(t)$ with degree $2n-1$ should be $0 \cdot t^{2n-1}$ or 
$\pm 1 \cdot t^{2n-1}$. 
Therefore, if the term of  $\Delta_K(t)$ with maximal degree is not 
$\pm t^k$ then the maximal degree is at most $2n-2$ and 
the minimal degree is at least $2$, hence 
$\deg \Delta_K(t)  \leq 2n-4$. 

On the other hand, the cycle $k_1$ bounds a disk $D_1$ whose interior 
is disjoint from the flat plumbing basket $F$, 
and we can compress $F$ along $D_1$. 
Therefore, we see $2g(K)+2 \leq fpbk(K)$. 
\end{proof}

The following table lists flat basket codes for all the prime knots of 
at most 9 crossings and the evaluation of the flat plumbing basket numbers 
of knots obtained by the flat basket codes and 
Theorem \ref{thm:lowerbound}. 
For reference, the three genera $g(K)$ are on the table (see for example 
a comment on Table 2 in p.254, and F.3 begins from p.270 of \cite{Kawauchi}). 
The numbering of the knots follows that of \cite{Rolfsen}. 
We remark by the bullet $\bullet$ the knot which corresponds to the case (2) 
of Theorem \ref{thm:lowerbound} and 
$\max \{ 2g(K)+2, \deg \Delta_K(t) +4 \}= \deg \Delta_K(t)+4$
(see (c) of Remark \ref{rmk:lowerbound}). 
We remark by the asterisk * that the flat plumbing basket number for $9_{24}$ 
was determined by Mika Aoki, who was an undergraduate student advised by 
Tsuyoshi Kobayashi, and by the double asterisk ** that 
the flat plumbing basket number for $9_{29}$ 
was determined by Mikami Hirasawa. 
\medskip

\begin{center}
\begin{tabular}[h]{cccc}
Knot $K$ & Flat basket code of $K$ & $g(K)$ & $fpbk(K)$ \\
\hline
$3_1$ & (1,2,3,4,1,2,3,4) & 1 & 4 \\
$4_1$ & (1,2,4,3,1,2,4,3) & 1 & 4 \\
$5_1$ & (1,2,3,4,5,6,1,2,3,4,5,6) & 2 & 6 \\
$5_2$ & (1,2,3,5,6,4,5,6,1,2,3,4) & 1 & 6 $\bullet$ \\
$6_1$ & (1,2,3,1,2,4,6,5,3,4,6,5) & 1 & 6 $\bullet$ \\
$6_2$ & (1,2,3,4,6,5,1,2,3,4,6,5) & 2 & 6 \\
$6_3$ & (1,2,3,6,5,4,1,2,3,6,5,4) & 2 & 6 \\
$7_1$ & (1,2,3,4,5,6,7,8,1,2,3,4,5,6,7,8) & 3 & 8 \\
$7_2$ & (1,2,8,1,4,5,6,3,4,5,6,7,2,3,7,8) & 1 & 6 - 8 $\bullet$ \\
$7_3$ & (1,3,2,1,8,7,6,5,4,3,8,7,6,5,4,2) & 2 & 8 $\bullet$ \\
$7_4$ & (1,6,3,1,6,2,7,4,3,2,7,8,5,4,8,5) & 1 & 6 - 8 $\bullet$ \\
$7_5$ & (1,2,3,4,7,8,5,6,7,8,1,2,3,4,5,6) & 2 & 8 $\bullet$ \\
$7_6$ & (1,2,3,5,6,4,1,2,3,5,6,4)& 2 & 6 \\
$7_7$ & (1,2,4,3,6,5,1,2,4,3,6,5) & 2 & 6 \\
$8_1$ & (1,3,8,2,1,5,6,4,5,6,7,3,4,7,8,2) & 1 & 6 - 8 $\bullet$ \\
$8_2$ & (1,2,3,4,5,6,8,7,1,2,3,4,5,6,8,7) & 3 & 8 \\
$8_3$ & (1,2,5,3,1,4,6,2,5,4,6,3) & 1 & 6 $\bullet$ \\
$8_4$ & (1,2,3,4,7,5,1,2,7,8,6,3,4,5,8,6) & 2 & 8 $\bullet$ \\
$8_5$ & (1,2,8,7,6,5,1,4,3,2,8,7,6,5,4,3) & 3 & 8 \\
$8_6$ & (1,2,3,4,5,1,2,6,8,7,3,4,5,6,8,7) & 2 & 8 $\bullet$ \\
$8_7$ & (1,8,7,6,3,4,5,2,1,8,7,6,3,4,5,2) & 3 & 8 \\
$8_8$ & (1,4,3,2,1,8,5,6,7,4,3,8,5,6,7,2) & 2 & 8 $\bullet$ \\
$8_9$ & (1,2,3,4,8,7,6,5,1,2,8,7,6,3,4,5) & 3 & 8 \\
$8_{10}$ & (1,2,3,8,7,6,1,2,5,4,3,8,7,6,5,4) & 3 & 8 \\
$8_{11}$ & (1,2,3,5,4,7,8,6,7,8,1,2,3,5,4,6) & 2 & 8 $\bullet$ \\
$8_{12}$ & (1,2,4,6,5,3,1,2,4,6,5,3) & 2 & 6 \\
$8_{13}$ & (1,4,3,2,1,6,7,8,5,6,4,7,3,8,5,2) & 2 & 8 $\bullet$ \\
$8_{14}$ & (1,2,4,3,7,8,5,6,7,8,1,2,4,3,5,6) & 2 & 8 $\bullet$ \\
$8_{15}$ & (1,2,3,4,1,2,5,6,7,3,4,8,5,6,7,9,10,8,9,10) & 2 & 8 - 10 $\bullet$ \\
$8_{16}$ & (1,2,3,4,5,8,7,6,1,2,3,8,4,7,5,6) & 3 & 8 \\
$8_{17}$ & (1,5,6,7,8,4,3,2,1,5,4,6,7,3,8,2) & 3 & 8 \\
$8_{18}$ & (1,2,3,4,8,7,6,5,1,2,8,3,7,4,6,5) & 3 & 8 \\
$8_{19}$ & (1,3,2,1,10,9,8,7,6,5,4,10,9,8,7,3,6,5,4,2) & 3 & 8 - 10 \\
$8_{20}$ & (1,2,3,6,5,1,6,4,5,2,3,4) & 2 & 6 \\
$8_{21}$ & (1,2,3,5,6,1,2,4,5,6,3,4) & 2 & 6 \\
\hline
\end{tabular}

\begin{tabular}[h]{cccc}
Knot $K$ & Flat basket code of $K$ & $g(K)$ & $fpbk(K)$ \\
\hline
$9_1$ & (1,2,3,4,5,6,7,8,9,10,1,2,3,4,5,6,7,8,9,10) & 4 & 10 \\
$9_2$ & (1,2,4,1,2,10,7,8,9,10,7,8,5,6,9,5,3,4,6,3) & 1 & 6 - 10 $\bullet$ \\
$9_3$ & (1,3,2,1,10,9,8,7,6,5,4,3,10,9,8,7,6,5,4,2) & 3 & 10 $\bullet$ \\
$9_4$ & (1,3,4,1,3,10,5,6,7,8,9,10,5,6,7,8,2,4,9,2) & 2 & 8 - 10 $\bullet$ \\
$9_5$ & (1,2,10,7,6,2,10,3,8,7,3,4,9,8,4,9,5,1,6,5) & 1 & 6 - 10 $\bullet$ \\
$9_6$ & (1,2,3,4,5,6,9,10,7,8,9,10,1,2,3,4,5,6,7,8) & 3 & 10 $\bullet$ \\
$9_7$ & (1,2,3,4,6,7,8,5,6,9,10,7,8,9,10,1,2,3,4,5) & 2 & 8 - 10 $\bullet$ \\
$9_8$ & (1,5,3,4,1,2,6,7,8,5,2,6,7,8,3,4) & 2 & 8 $\bullet$ \\
$9_9$ & (1,2,3,4,5,9,10,6,7,8,9,10,1,2,3,4,5,6,7,8) & 3 & 10 $\bullet$ \\
$9_{10}$ & (1,3,2,1,10,9,8,10,9,7,6,5,4,3,8,7,6,5,4,2) & 2 & 8 - 10 $\bullet$ \\
$9_{11}$ & (1,8,7,6,3,5,4,2,1,8,7,6,3,5,4,2) & 3 & 8 $\bullet$ \\
$9_{12}$ & (1,2,3,7,4,6,1,2,3,7,4,8,5,6,8,5) & 2 & 8 \\
$9_{13}$ & (1,3,2,1,10,9,8,7,6,10,9,8,7,5,4,3,6,5,4,2) & 2 & 8 - 10 $\bullet$ \\
$9_{14}$ & (1,4,5,8,6,2,7,3,2,7,1,4,5,8,6,3) & 2 & 8 $\bullet$ \\
$9_{15}$ & (1,4,3,2,1,8,5,7,6,4,3,8,5,7,6,2) & 2 & 8 $\bullet$ \\
$9_{16}$ & (1,6,5,4,3,2,1,10,9,8,7,6,5,4,10,9,8,7,3,2) & 3 & 10 $\bullet$ \\
$9_{17}$ & (1,3,4,5,6,8,7,2,1,3,4,5,6,8,7,2) & 3 & 8 \\
$9_{18}$ & (1,2,3,4,1,2,3,5,6,7,8,4,5,6,7,9,10,8,9,10) & 2 & 8 - 10 $\bullet$ \\
$9_{19}$ & (1,7,2,3,1,7,2,4,5,8,6,3,4,5,8,6) & 2 & 8 $\bullet$ \\
$9_{20}$ & (1,2,3,4,8,5,6,7,1,2,3,4,8,5,6,7) & 3 & 8 \\
$9_{21}$ & (1,5,4,7,3,2,8,6,1,8,6,5,4,7,3,2) & 2 & 8 $\bullet$ \\
$9_{22}$ & (1,7,6,4,5,3,2,8,1,4,7,6,5,3,2,8) & 3 & 8 \\
$9_{23}$ & (1,2,3,4,1,2,3,5,6,7,8,4,5,6,9,10,7,8,9,10) & 2 & 8 - 10 $\bullet$ \\
$9_{24}$ & (1,2,4,7,3,6,5,8,1,2,7,6,4,3,5,8) & 3 & 8 * \\
$9_{25}$ & (1,3,4,8,1,9,2,3,5,9,2,6,10,7,5,6,10,7,4,8) & 2 & 8 - 10 \\
$9_{26}$ & (1,5,6,8,7,4,3,2,1,5,6,8,7,4,3,2) & 3 & 8 \\
$9_{27}$ & (1,5,6,7,4,3,2,8,1,5,6,7,4,3,2,8) & 3 & 8 \\
$9_{28}$ & (1,6,2,3,5,4,7,8,1,6,5,2,3,4,7,8) & 3 & 8 \\
$9_{29}$ & (1,2,7,3,5,4,6,8,7,1,2,3,5,6,8,4) & 3 & 8 ** \\
$9_{30}$ & (1,4,5,7,6,3,2,8,1,7,4,5,6,3,2,8) & 3 & 8 \\
$9_{31}$ & (1,4,5,6,3,2,7,8,1,4,5,6,3,2,7,8) & 3 & 8 \\
$9_{32}$ & (1,3,4,8,7,2,1,6,5,3,4,8,7,6,5,2) & 3 & 8 \\
$9_{33}$ & (1,3,7,4,6,5,2,8,1,7,6,3,4,5,2,8) & 3 & 8 \\
$9_{34}$ & (1,10,4,3,2,1,10,6,9,4,7,11,5,6,7,11,3,5,12,8,9,12,8,2) & 3 & 8 - 12 \\
$9_{35}$ & (1,3,7,1,8,2,4,8,2,6,9,5,6,9,10,3,4,5,10,7) & 1 & 6 - 10 $\bullet$ \\
$9_{36}$ & (1,8,7,6,3,2,5,4,1,8,7,6,3,5,4,2) & 3 & 8 \\
$9_{37}$ & (1,7,2,4,6,5,1,7,2,8,3,4,8,3,6,5) & 2 & 8 $\bullet$ \\
$9_{38}$ & (1,2,5,6,1,2,7,8,3,4,5,9,10,6,7,8,9,10,3,4) & 2 & 8 - 10 $\bullet$ \\
$9_{39}$ & (1,8,2,9,5,7,3,4,2,9,5,10,6,4,10,6,7,1,8,3) & 2 & 8 - 10 $\bullet$ \\
$9_{40}$ & (1,2,3,11,10,12,4,6,5,10,12,7,1,4,6,9,8,7,9,8,2,5,3,11) & 3 & 8 - 12\\
\hline
\end{tabular}

\begin{tabular}[h]{cccc}
Knot $K$ & Flat basket code of $K$ & g(K) & $fpbk(K)$ \\
\hline
$9_{41}$ & (1,5,8,2,7,8,2,4,3,6,7,9,5,4,3,6,10,1,9,10) & 2 & 8 -10 $\bullet$ \\
$9_{42}$ & (1,3,2,6,5,1,6,4,5,3,2,4) & 2 & 6 \\
$9_{43}$ & (1,5,4,3,2,1,5,4,9,10,8,7,6,9,10,8,3,7,2,6) & 3 & 8 - 10\\
$9_{44}$ & (1,2,3,5,1,6,8,7,6,8,4,5,2,7,3,4) & 2 & 6 - 8 \\
$9_{45}$ & (1,2,4,5,7,8,3,6,7,8,1,2,4,5,3,6) & 2 & 6 - 8 \\
$9_{46}$ & (1,3,6,1,4,5,3,2,4,6,2,5) & 1 & 6 $\bullet$ \\
$9_{47}$ & (1,3,2,7,6,5,4,8,1,3,7,6,2,5,4,8) & 3 & 8 \\
$9_{48}$ & (1,4,3,6,5,2,1,4,3,6,5,2) & 2 & 6 \\
$9_{49}$ & (1,4,3,10,9,8,7,6,10,9,5,8,7,4,3,6,2,1,5,2) & 2 & 8 -10 $\bullet$ \\
\hline
\end{tabular}
\end{center}

\vspace{15pt}

\subsection*{Acknowledgments}
This paper is a part of the master thesis of the second author. 
The authors wish to express their gratitude to Professors 
Takuji Nakamura and Yasutaka Nakanishi 
for encouragement and advices, 
Professor Mikami Hirasawa, for many discussion 
and informing his result on $9_{29}$, 
and Professor Tsuyoshi Kobayashi for informing of the result by 
his student, Mika Aoki. 

\end{document}